\newcommand {\onehalf}{1/2}
\newcommand {\onetwelfth}{1/12}
\newcommand {\erfcinv}{\mathrm{erfc}^{-1}}
\newcommand {\floor}[1]{{\left\lfloor{#1}\right\rfloor}}
\newcommand {\abs}[1]{{\left|{#1}\right|}}
\newcommand{\ekc}{E(k_c)}
\newcommand{\ekcstar}{E(k_c^\mathrm{star})}
\newcommand{\kcstar}{k_c^\mathrm{star}}
\newcommand{\ekcchain}{E(k_c^\mathrm{chain})}
\newcommand{\kcchain}{k_c^\mathrm{chain}}
\newcommand{\ekcdb}{E(k_c^\mathrm{db})}
\newcommand{\kcdb}{k_c^\mathrm{db}}
\title{Synchronisation Properties of Trees in the Kuramoto Model}
\author{Anthony~H.~Dekker\thanks{Graduate School of Information Technology \& Mathematical Sciences, University of Ballarat, Australia (\texttt{dekker@acm.org}). Part of this work was conducted while in the employ of DSTO.} \and Richard Taylor\thanks{Joint Operations Division, DSTO, Canberra, ACT (\texttt{richard.taylor@dsto.defence.gov.au})}}
\begin{document}

\maketitle

\begin{abstract}
We consider the Kuramoto model of coupled oscillators, specifically the case of tree networks, for which we prove a simple closed-form expression for the critical coupling. For several classes of tree, and for both uniform and Gaussian vertex frequency distributions, we provide tight closed form bounds and empirical expressions for the expected value of the critical coupling. We also provide several bounds on the expected value of the critical coupling for all trees. Finally, we show that for a given set of vertex frequencies, there is a rearrangement of oscillator frequencies for which the critical coupling is bounded by the spread of frequencies.
\end{abstract}

\begin{keywords} 
tree, synchronisation, Kuramoto model
\end{keywords}

\begin{AMS}
05C05, 34C25, 34C28, 37C25, 37F20, 82B26
\end{AMS}

\pagestyle{myheadings}
\thispagestyle{plain}
\markboth{A.~H. DEKKER AND R. TAYLOR}{SYNCHRONISATION PROPERTIES OF TREES}


\section{Introduction}
\label{sec:intro}

The Kuramoto model \cite{Arenas2008, Bronski2012, Dorogovtsev2008, Kalloniatis2010, Strogatz1998, Strogatz2000, Strogatz2003} was originally motivated by the phenomenon of collective synchronisation whereby a system of coupled oscillating vertices (nodes) will sometimes lock on to a common frequency despite differences in the natural frequencies of the individual vertices. Biological examples include oscillations of the heart \cite{Winfree1980} and of chemical systems \cite{Kuramoto1984}. Systems of coupled oscillators can also be used as an abstract model for synchronisation in organisations \cite{Dekker2007,  Dekker2010, Kalloniatis2008}.
While Kuramoto studied the infinite complete network, it is natural to consider finite networks of any given topology. This would correspond to a notion of coupling that is not universal across all vertex (node) pairs, but rather applies to a subset of all possible edges (links). For example the work patterns of human individuals in an organisation might enjoy a coupling effect in relation to pairs of individuals that have a working relationship.

A typical network of coupled oscillators is as shown in Figure \ref{fig:intro}. Each vertex has an associated phase angle $\theta_i$, as well as its own natural frequency $\omega_i$. In this paper we are primarily concerned with the case where the network is a tree. The basic governing equation is the differential equation:
\begin{equation}
\dot{\theta}_i = \omega_i + k \sum_{i=1}^n A_{ij} \sin (\theta_j - \theta_i)
\end{equation}
where $A$ is the adjacency matrix of the network and $k$ is a coupling constant which determines the strength of the coupling.  We refer to a network (graph) with preferred
oscillator frequencies $\omega_i$ attached to the vertices as a \textit{Kuramoto graph}.
 
\begin{figure}[htbp]
\begin{center}
\includegraphics[width=7.8cm]{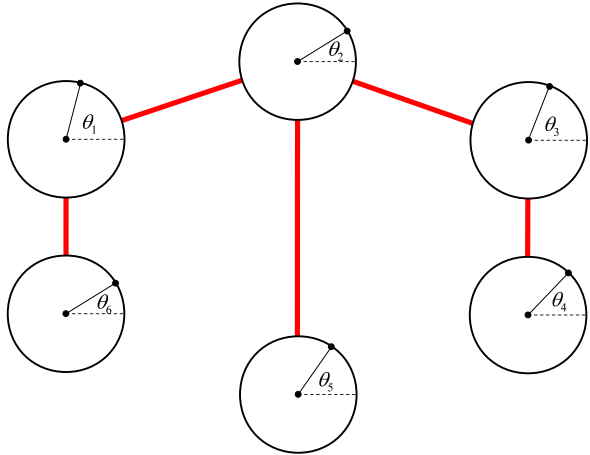}
\caption{An example Kuramoto tree.} \label{fig:intro}
\end{center}
\end{figure}

It has been observed \cite{Dorogovtsev2008, VanHemmen1993} that many Kuramoto graphs \textit{synchronise}, in that the actual vertex frequencies $\dot{\theta}_i $ converge to a common value. That is, the vertex phases $\theta_i$ rotate at the same rate, with a constant phase difference between each pair of vertices. Moreover this phenomenon appears at a \textit{critical coupling constant} $k_c$, and as $k$ increases, this phenomenon applies to a greater range of natural frequencies $\omega_i$ (in \cite{Strogatz1998}, the probability of synchronisation is studied). Thus the graph has a \textit{frequency fixed point} characterised by all the $\dot{\theta_i}$ being equal.

At the frequency fixed point, the $\dot{\theta_i}$ are equal to $\bar{\omega}$, the mean of the frequencies $\omega_i$, and it is convenient to apply a rotating frame of reference, with:
\begin{equation}
\phi_i (t) = \theta_i (t) - \bar{\omega} t
\end{equation}
We then obtain a system of differential equations equivalent to the original:
\begin{equation}\label{rotating:eq}
\dot{\phi}_i = \omega_i - \bar{\omega} + k \sum_{i=1}^n A_{ij} \sin (\phi_j - \phi_i)
\end{equation}
At the frequency fixed point, we then have $\dot{\phi_i} = 0$ for all $i$. While
the literature contains several definitions of ``critical coupling,'' here we
define the critical coupling $k_c$ as that number for which a frequency fixed point exists exactly when $k \geq k_c$ (in \cite{Jadbabai2004}, this critical coupling is called $K_L$).

In general the value of the critical coupling is the solution of simultaneous transcendental equations and would not be expected to have a closed form solution. However, for complete graphs and complete bipartite graphs the critical coupling can be computed efficiently 
as the solution of non-linear equations \cite{Verwoerd2008, Verwoerd2009}. For the graph with only two vertices $\nu_1$ and $\nu_2$ and a single edge between them, it is easy to see that solutions exist precisely when $\abs{\omega_1 - \omega_2} \leq 2k$, i.e.
\begin{equation}\label{twin:eq}
k_c = \frac{\abs{\omega_1 - \omega_2}}{2}
\end{equation}
Our first theorem generalises this to any tree. We then use that result to calculate the \textit{expected value} of the critical coupling for several classes of Kuramoto tree having randomly chosen frequencies, and to find bounds on the expected value of the critical coupling for Kuramoto trees in general. Finally, we use the result to show that the critical coupling for a specific tree topology can be reduced to a value which is independent of the number of vertices, by reshuffling the frequencies $\omega_i$.


\section{Main Lemma and Corollaries}
\label{sec:lemmas}

The first step to our initial theorem is a lemma which simplifies the calculation of critical
couplings by breaking a Kuramoto graph into components. In particular, we consider
Kuramoto graphs with a cut-vertex, i.e.\ one whose removal disconnects the graph. 
Figure \ref{rtfigure1} shows an example.

\begin{figure}[htbp]
\begin{center}
\includegraphics[width=7.8cm]{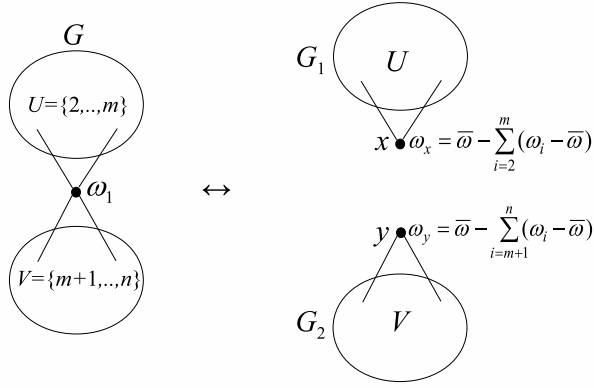}
\caption{Construction for Lemma \ref{main:lemma}.}
\label{rtfigure1}
\end{center}
\end{figure}

\begin{lemma}\label{main:lemma}
Let $G$ be a Kuramoto graph with $n$ vertices $\nu_1,\nu_2,\ldots,\nu_n$ with natural frequencies $\omega_{1},\omega_{2},\ldots,\omega_{n}$ in which vertex $\nu_1$ is a cut-vertex (whose removal disconnects the graph) with vertices $\nu_2,\ldots,\nu_m$ on one side of the cut and vertices $\nu_{m+1},\ldots,\nu_n$ on the other. As shown in Figure \ref{rtfigure1}, let $G_{1}$ and  $G_{2}$ be the two graphs formed by deleting all the vertices on one side of the cut vertex and replacing the cut vertex by new vertices $x$ (for $G_{1}$) and $y$ (for $G_{2}$) having frequencies $\omega_{x}$ and $\omega_{y}$:
\begin{equation}
\omega_{x} = \bar{\omega} - \sum_{i=2}^m (\omega_i - \bar{\omega}) = \omega_1 + \sum_{i=m+1}^n (\omega_i - \bar{\omega}) \label{eq:omegax}
\end{equation}
\begin{equation}
\omega_{y} = \bar{\omega} - \sum_{i=m+1}^n (\omega_i - \bar{\omega}) = \omega_1 + \sum_{i=2}^m (\omega_i - \bar{\omega}) \label{eq:omegay}
\end{equation}
Then $G$ has a frequency fixed point if and only if both $G_1$ and $G_2$ have frequency fixed points, and hence $k_c (G) = \max(k_c(G_1), k_c(G_2))$. 
\end{lemma}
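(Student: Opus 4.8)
The plan is to work directly with the fixed-point form of (\ref{rotating:eq}): a frequency fixed point is a choice of phases $\phi_i$ with $\dot\phi_i = 0$ for all $i$, i.e.
\begin{equation}
\omega_i - \bar\omega = k \sum_{j=1}^n A_{ij}\sin(\phi_i - \phi_j), \qquad 1 \leq i \leq n. \label{eq:fp}
\end{equation}
Two preliminary facts drive everything. First, the identity $\sum_{i=1}^n(\omega_i - \bar\omega) = 0$ makes the two expressions in (\ref{eq:omegax})--(\ref{eq:omegay}) equal and, crucially, forces both $G_1$ and $G_2$ to have mean frequency $\bar\omega$; I would verify this at the outset (a one-line computation of $\omega_x + \sum_{i=2}^m\omega_i = m\bar\omega$, and symmetrically for $G_2$), since it guarantees that all three graphs share the same rotating frame and hence the same equations (\ref{eq:fp}). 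Second, summing (\ref{eq:fp}) over the vertices $\nu_2,\dots,\nu_m$ on one side of the cut, the sine terms on edges internal to that side cancel in antisymmetric pairs, and because $\nu_1$ is a cut-vertex the only surviving terms lie on edges joining that side to $\nu_1$; this yields
\begin{equation}
\sum_{i=2}^m (\omega_i - \bar\omega) = k\sum_{i=2}^m A_{i1}\sin(\phi_i - \phi_1), \label{eq:crossflow}
\end{equation}
and symmetrically for the side $\nu_{m+1},\dots,\nu_n$.

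For the forward implication, suppose $G$ has a fixed point $(\phi_1,\dots,\phi_n)$. For $G_1$ (which retains $\nu_2,\dots,\nu_m$ and replaces $\nu_1$ by $x$) I keep the phases $\phi_2,\dots,\phi_m$ and set $\phi_x = \phi_1$. Because $\nu_1$ is a cut-vertex, each retained vertex has all its neighbours among $\nu_2,\dots,\nu_m$ together with $\nu_1$, whose role is now played by $x$ at the same phase, so its equation (\ref{eq:fp}) is literally the one it satisfied in $G$. The only genuinely new equation is the one for $x$, namely $\omega_x - \bar\omega = k\sum_{i=2}^m A_{1i}\sin(\phi_1 - \phi_i)$; rewriting the $\nu_{m+1},\dots,\nu_n$ contribution in $\nu_1$'s original equation by the analogue of (\ref{eq:crossflow}) for that side collapses this exactly to the defining value of $\omega_x$. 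The symmetric argument produces a fixed point of $G_2$.

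For the reverse implication I would glue. Given fixed points of $G_1$ and $G_2$, I exploit the fact that (\ref{eq:fp}) depends only on phase \emph{differences}, so each solution may be rigidly rotated; I rotate so that $x$ and $y$ both sit at a common phase, assign that value to $\phi_1$, and import the remaining phases for the two sides. Every equation except the one at $\nu_1$ then holds by construction. The equation at $\nu_1$ is precisely the sum of the new-vertex equations of $G_1$ and $G_2$: adding $(\omega_x - \bar\omega)$ and $(\omega_y - \bar\omega)$ and simplifying with $\sum_{i=1}^n(\omega_i - \bar\omega)=0$ returns exactly $\omega_1 - \bar\omega$, matching the right-hand side $k\sum_{i=2}^n A_{1i}\sin(\phi_1 - \phi_i)$. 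The main obstacle, and the step I would treat most carefully, is this gluing: one must use the rotational gauge freedom to align the cut-vertex phase across the two subproblems, and then confirm that the single shared equation at $\nu_1$ is the algebraic sum of the two pieces.

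Finally, the equality of critical couplings is a formal consequence. For each fixed $k$ the construction is coupling-independent, since $\omega_x$ and $\omega_y$ do not involve $k$, so the equivalence ``$G$ has a fixed point $\iff$ both $G_1$ and $G_2$ do'' holds at every $k$. By the definition of critical coupling adopted in the introduction, the set of admissible $k$ for each Kuramoto graph is a half-line $[k_c,\infty)$; intersecting the half-lines for $G_1$ and $G_2$ gives $[\max(k_c(G_1),k_c(G_2)),\infty)$, whence $k_c(G) = \max(k_c(G_1),k_c(G_2))$.
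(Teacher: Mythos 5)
Your proposal is correct and follows essentially the same route as the paper's proof: summing the fixed-point equations over one side of the cut so that internal sine terms cancel, deducing the defining equation for the replacement vertex $x$ (resp.\ $y$), and gluing the two sub-solutions via the rotational freedom to align $\phi_x = \phi_y = \phi_1$. Your explicit verification that the glued phases satisfy the equation at $\nu_1$ (via $\omega_x + \omega_y - 2\bar{\omega} = \omega_1 - \bar{\omega}$) is a detail the paper leaves implicit, but it is the same argument.
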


\begin{proof}
First we note that the frequencies $\omega_{x}$ and $\omega_{y}$ are chosen in \ref{eq:omegax} and \ref{eq:omegay} so that the average frequencies of both $G_{1}$ and $G_{2}$ are the same as that of $G$.  Let $G$ have a frequency fixed point so that we have $k$ and $\phi_{i}, i=1,\ldots,n$ satisfying the system of equations for $i=1,\ldots,n$:
\begin{equation}\label{rt:eq1}
0=\omega_{i}-\bar{\omega}+k\sum_{j=1}^n A_{ij}\sin(\phi_{j}-\phi_{i})
\end{equation}
If we sum these equations from $i=1,\ldots,m$, then each term $A_{ij}\sin(\phi_{j}-\phi_{i})$ is cancelled by the term $A_{ji}\sin(\phi_{i}-\phi_{j})$ with the exception of the pairs $(1,j), j=m+1,\ldots,n$ and we obtain:
\begin{equation}\label{rt:eq2}
0=\sum_{i=1}^m (\omega_{i}-\bar{\omega})+k\sum_{i=m+1}^n A_{1i}\sin(\phi_{i}-\phi_{1})
\end{equation}
Subtracting equation (\ref{rt:eq2}) from (\ref{rt:eq1}), for the case $i=1$, gives an equation for vertex $x$:
\begin{eqnarray}
0&=&-\sum_{i=2}^m (\omega_{i}-\bar{\omega})+k\sum_{i=2}^m A_{1i}\sin(\phi_{i}-\phi_{1}) \nonumber\\ &=&\bar{\omega}-\omega_{x}+k\sum_{i=2}^m A_{1i}\sin(\phi_{i}-\phi_{1})
\end{eqnarray}
Similarly by summing (\ref{rt:eq1}) from $i=m+1,\ldots,n$ we obtain an equation for vertex $y$:
\begin{equation}
0=\bar{\omega}-\omega_{y}+k\sum_{i=m+1}^n A_{1i}\sin(\phi_{i}-\phi_{1})
\end{equation}
It follows that a solution to the system of equations (\ref{rt:eq1}) is a solution to the system of equations for $G_{1}$ and $G_{2}$ in which the vertices $x$ of $G_{1}$ and $y$ of $G_{2}$ have the same phase $\phi_{1}$. If on the other hand we have a solution to the system of fixed point equations for $G_{1}$ and $G_{2}$, then we can shift all the phase angles of the vertices of $G_{2}$ by the same amount so that $\phi_{y}=\phi_{x}$, without changing the phase angle differences. This provides a fixed point solution to the system of equations (\ref{rt:eq1}) and completes the proof. \qquad
\end{proof}

Repeated application of this lemma can reduce the calculation of the critical coupling to 2-connected graphs (in which any two vertices lie on a cycle) and tree edges. For example, we have the following corollary for graphs with a cut-edge, i.e.\ an edge whose removal disconnects the graph:

\begin{corollary}\label{bell:corollary}
Let $G$ be a Kuramoto graph with $n$ vertices $1,2,\ldots,n$ with natural frequencies $\omega_{1},\omega_{2},\ldots,\omega_{n}$ in which the edge from $\nu_1$ to $\nu_2$ is a cut-edge with vertices $\nu_1$ and $\nu_3,\ldots,\nu_m$ on one side of the cut and vertices $\nu_2$ and $\nu_{m+1},\ldots,\nu_n$ on the other. Let $G'$ be the one-edge graph containing only the vertices $x$ and $y$, where:
\begin{eqnarray}
\omega_{x} & = & \omega_1 + \sum_{i=3}^m (\omega_i - \bar{\omega}) \\
\omega_{y} & = & \omega_2 + \sum_{i=m+1}^n (\omega_i - \bar{\omega})
\end{eqnarray}
Then $G$ has a frequency fixed point only if $G'$ has a frequency fixed point, i.e.
\begin{equation}
\abs{\omega_1 - \omega_2 + \sum_{i=3}^m (\omega_i - \bar{\omega}) - \sum_{i=m+1}^n (\omega_i - \bar{\omega})} \leq 2 k
\end{equation}
\end{corollary}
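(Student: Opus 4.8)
The plan is to derive this corollary directly from Lemma~\ref{main:lemma} by applying it twice, peeling off the two branches hanging from the endpoints of the cut-edge, reducing $G$ to the single-edge graph $G'$, and then invoking the two-vertex formula~(\ref{twin:eq}). Since the lemma gives an ``if and only if'' for the existence of a frequency fixed point, each reduction preserves the property ``$G$ has a fixed point'' in the forward direction, which is exactly the ``only if'' the corollary asks for.

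Concretely, I would first apply Lemma~\ref{main:lemma} with $\nu_2$ as the cut-vertex, taking $\{\nu_{m+1},\ldots,\nu_n\}$ as one side and $\{\nu_1,\nu_3,\ldots,\nu_m\}$ as the other, and \emph{retain} the component containing $\nu_1$. By the lemma this replaces the subtree $\{\nu_2,\nu_{m+1},\ldots,\nu_n\}$ by a single new vertex $y$ adjacent to $\nu_1$, whose frequency absorbs the deviations of the discarded side, giving $\omega_y = \omega_2 + \sum_{i=m+1}^n(\omega_i-\bar\omega)$ --- exactly the value in the statement. I would then apply the lemma a second time to this reduced graph with $\nu_1$ as the cut-vertex, with one side $\{\nu_3,\ldots,\nu_m\}$ and the other the single vertex $\{y\}$, retaining the component containing $y$. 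This collapses $\{\nu_1,\nu_3,\ldots,\nu_m\}$ into a single vertex $x$ adjacent to $y$ with $\omega_x = \omega_1 + \sum_{i=3}^m(\omega_i-\bar\omega)$, leaving precisely the one-edge graph $G'$ on $\{x,y\}$. Throughout, the crucial fact is that the lemma is built so that the new vertex replacing a cut-vertex absorbs the frequency deviations of the \emph{discarded} side while $\bar\omega$ is left unchanged; keeping straight which side is absorbed at each step is the only real bookkeeping obstacle, and it dictates that one must retain (rather than discard) the component heading toward the other endpoint.

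Having reduced to $G'$, the two applications of Lemma~\ref{main:lemma} chain together to give that $G$ has a frequency fixed point only if $G'$ does. For the two-vertex graph $G'$, equation~(\ref{twin:eq}) states $k_c(G') = \abs{\omega_x-\omega_y}/2$, so a fixed point for $G'$ exists precisely when $\abs{\omega_x-\omega_y}\le 2k$. Substituting the computed values of $\omega_x$ and $\omega_y$ yields the stated inequality.

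Finally, I would dispose of the degenerate cases. If $\nu_2$ has degree one (so $\{\nu_{m+1},\ldots,\nu_n\}=\emptyset$) it is not a genuine cut-vertex, but then $\omega_y=\omega_2$ and the first reduction is vacuous; similarly, if $\{\nu_3,\ldots,\nu_m\}=\emptyset$ the second reduction is vacuous and $\omega_x=\omega_1$. In either case the argument goes through unchanged, and when both branches are empty the result collapses to the one-edge base case~(\ref{twin:eq}) itself.
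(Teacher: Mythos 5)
Your proposal is correct and takes essentially the same route as the paper: the paper also applies Lemma \ref{main:lemma} twice to peel off the branches $\{\nu_3,\ldots,\nu_m\}$ and $\{\nu_{m+1},\ldots,\nu_n\}$, producing three components including the one-edge graph $G'$ on $\{x,y\}$, and then obtains the inequality by applying (\ref{twin:eq}) to $G'$. The only differences are cosmetic --- you track just the retained components and spell out the degenerate leaf cases, which the paper leaves implicit.
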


\begin{proof}
Applying Lemma \ref{main:lemma} twice yields three components: one based on the vertices $\{\nu_1, \nu_3,\ldots,\nu_m\}$, one based on $\{\nu_2, \nu_{m+1},\ldots,\nu_n\}$, and  the one-edge graph $G' = \{x,y\}$ based on $\{\nu_1, \nu_2\}$. The condition on $k$ follows by applying (\ref{twin:eq}) to $G'$. \qquad
\end{proof}

For graphs which are trees, repeated application of Lemma \ref{main:lemma} produces components which are exactly the tree edges, with a similar adjustment of frequencies. We can therefore obtain an ``if and only if'' condition, and hence obtain a precise value for $k_c$:

\begin{theorem}\label{main:theorem}
Let $T$ be any tree with at least two vertices. For any edge $e$ of $T$, let $T_{e}$ and $T'_{e}$ be the two subtrees formed when $e$ is deleted from $T$. These two partitions have $|T_{e}|$ and $|T'_{e}|$ vertices respectively. Define the partition sum:
\begin{equation}
\Omega(T_{e})=\sum_{i \in T_{e}}|\omega_{i}-\bar{\omega}|=\left||T_{e}|\bar{\omega}-\sum_{i \in T_{e}}\omega_{i}\right|
\end{equation}
to be the sum of the frequency deviations from $\bar{\omega}$. Then $\Omega(T_{e})=\Omega(T'_{e})$ and the critical coupling $k_{c}$ for $T$ is given by:
\begin{equation}
k_c = \max_{ e \in T}\Omega(T_{e})      \label{maineq:15}
\end{equation}
\end{theorem}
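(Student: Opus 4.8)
The plan is to prove the two assertions in turn. The symmetry $\Omega(T_e)=\Omega(T'_e)$ is immediate and I would dispose of it first: since $\bar\omega$ is the mean of all the frequencies, $\sum_{i=1}^n(\omega_i-\bar\omega)=0$, and because $T_e$ and $T'_e$ partition the vertex set of $T$ we get $\sum_{i\in T_e}(\omega_i-\bar\omega)=-\sum_{i\in T'_e}(\omega_i-\bar\omega)$; taking absolute values yields $\Omega(T_e)=\Omega(T'_e)$. This also shows that $\Omega(T_e)$ depends only on the edge $e$, not on which of the two pieces we single out. For the value of $k_c$, I would argue by induction on the number $n$ of vertices, using Lemma \ref{main:lemma} as the inductive engine, since that lemma already packages both the ``only if'' and the ``if'' directions into the single recursion $k_c(T)=\max(k_c(G_1),k_c(G_2))$.

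For the base case $n=2$ the tree is a single edge $e$ with $T_e=\{\nu_1\}$, so $\Omega(T_e)=|\omega_1-\bar\omega|=|\omega_1-\omega_2|/2$, which is exactly the value of $k_c$ given by \ref{twin:eq}. For the inductive step with $n\ge 3$, I would pick a leaf $\ell$ and its neighbour $u$; since $n\ge 3$ this $u$ has degree at least $2$ and is therefore a cut-vertex, so Lemma \ref{main:lemma} applies and splits $T$ into the two-vertex tree $G_1=\{x,\ell\}$ and a tree $G_2$ on the remaining $n-1$ vertices with $u$ replaced by $y$. Both $G_1$ and $G_2$ inherit the mean $\bar\omega$, so the induction hypothesis applies to $G_2$ (and $G_1$ is the base case). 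The recursion gives $k_c(T)=\max\bigl(k_c(G_1),k_c(G_2)\bigr)$, and $k_c(G_1)=|\omega_\ell-\bar\omega|=\Omega(T_{e_0})$ for the leaf edge $e_0=\{\ell,u\}$.

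The crux is matching partition sums. Every edge of $T$ other than $e_0$ is an edge of $G_2$, and I would show that deleting such an edge $e$ from $G_2$ produces a piece (the one not containing $y$) whose vertices and frequencies are literally those of the subtree $T_e$ of $T$ that avoids $u$; since $\bar\omega_{G_2}=\bar\omega$, this gives $\Omega_{G_2}(\cdot)=\Omega(T_e)$. Combining, $k_c(G_2)=\max_{e\ne e_0}\Omega(T_e)$, and hence $k_c(T)=\max_{e\in T}\Omega(T_e)$, closing the induction.

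The hard part will be purely the bookkeeping in this matching step: one must verify that the corrected frequencies $\omega_x,\omega_y$ prescribed by \ref{eq:omegax}--\ref{eq:omegay} are exactly what make each partition sum computed inside $G_2$ reproduce the corresponding partition sum of $T$, including checking (via the mean-zero relation) that the piece containing $y$ gives the same value through the symmetry already established. Once that identity is confirmed, everything collapses onto the recursion supplied by Lemma \ref{main:lemma}. As a consistency check and an alternative route to the ``only if'' half, I note that applying Corollary \ref{bell:corollary} directly to a single edge $e$ rewrites its necessary condition as $|2\sum_{i\in T_e}(\omega_i-\bar\omega)|\le 2k$, i.e.\ $\Omega(T_e)\le k$; this recovers the bound $k_c\ge\max_{e\in T}\Omega(T_e)$ without induction, while the inductive argument is what supplies the matching ``if'' direction and thus the exact value.
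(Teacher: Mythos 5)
Your proposal is correct and takes essentially the same approach as the paper: the paper's proof consists of repeatedly applying Lemma \ref{main:lemma} to reduce the tree to $n-1$ single-edge graphs and then invoking (\ref{twin:eq}) on each, which is exactly what your leaf-splitting induction formalizes. The only difference is one of rigour rather than substance—you make explicit the frequency bookkeeping (that the modified frequencies $\omega_x,\omega_y$ preserve the mean and reproduce the partition sums of $T$) that the paper leaves implicit.
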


\begin{proof}
We repeatedly apply Lemma \ref{main:lemma} to obtain $n-1$ graphs each consisting of a single edge. The condition on $k$ follows from (\ref{twin:eq}), applied to each of the $n-1$ edge graphs obtained. \qquad
\end{proof}

We note that, for trees, the condition in this theorem is equivalent to the more general sufficient conditions given in Remark 10 of \cite{Jadbabai2004} and Statement G1 of \cite{Dorfler2012}.  In the remainder of the paper we consider various implications of the result, facilitated by the form in which our result is expressed. We begin by considering the expected value of $k_c$, when frequencies are chosen randomly.


\section{Expected Critical Couplings}
\label{sec:exp}

It may be the case that we do not know the precise frequencies for a Kuramoto tree,
but we do know the \textit{distribution} of the frequencies.  In this case, we can apply Theorem \ref{main:theorem} to determine the \textit{expected value} of the critical coupling $k_c$ for various kinds of tree. Figure \ref{fig:examples} shows some of the trees considered.
In this section we combine theoretical derivations, using Theorem \ref{main:theorem} and statistical theory, with Monte Carlo experiments. In these experiments, we considered a variety of trees, and made 1,000,000 frequency assignments with $\omega_i$ uniformly distributed over the interval [0,1]. The frequencies $\omega_i$ therefore have mean \onehalf\ and variance $\sigma^2 = \onetwelfth$, and the expected critical coupling $\ekc$ is proportional to $\sigma$, the square root of the variance. We used Theorem \ref{main:theorem} to determine the critical coupling $k_c$ for each assignment, and hence the expected value of the critical coupling for that size and shape of tree. We also performed the same experiments using normally distributed frequencies with the same mean and variance, calculating over 8,000,000 frequency assignments in that case, to allow adequate sampling of the tail.

\begin{figure}[htbp]
\begin{center}
\begin{tabular}{|c|c|}
\hline
\includegraphics[width=3.4cm]{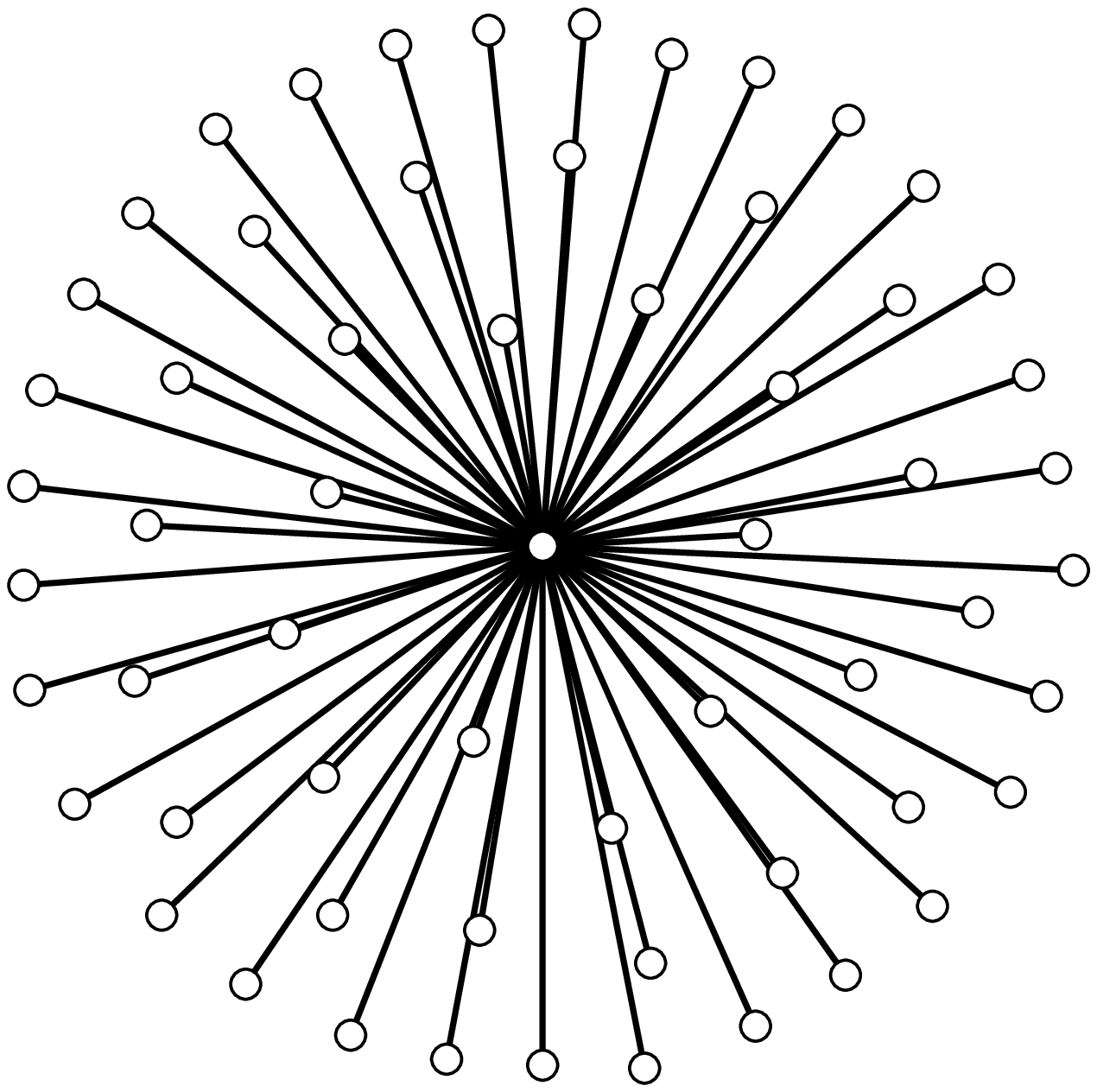} & \includegraphics[width=3.4cm]{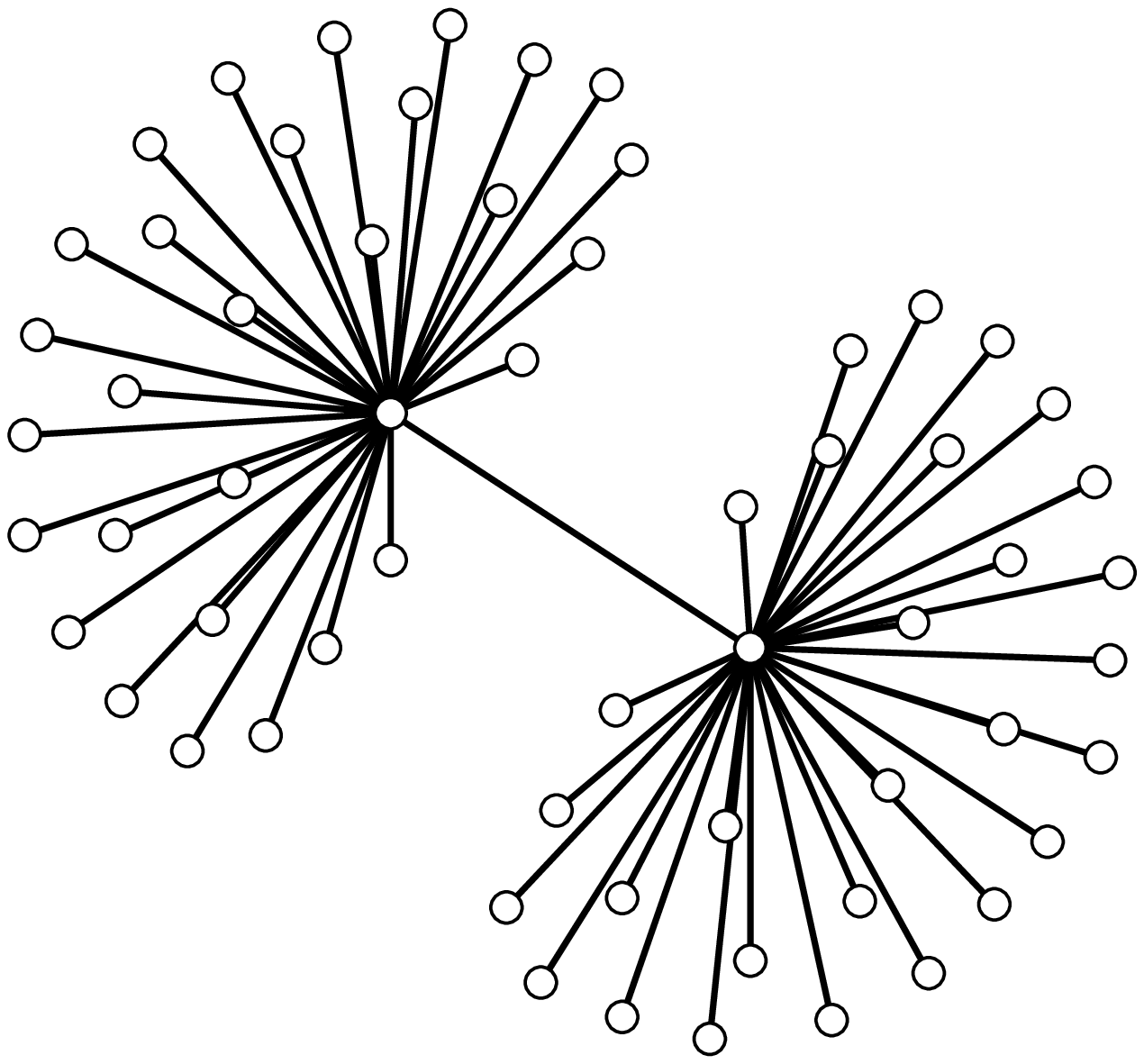} \\
(a) Star (asterisk) & (b) Dumb-bell \\
\hline
\includegraphics[width=3.4cm]{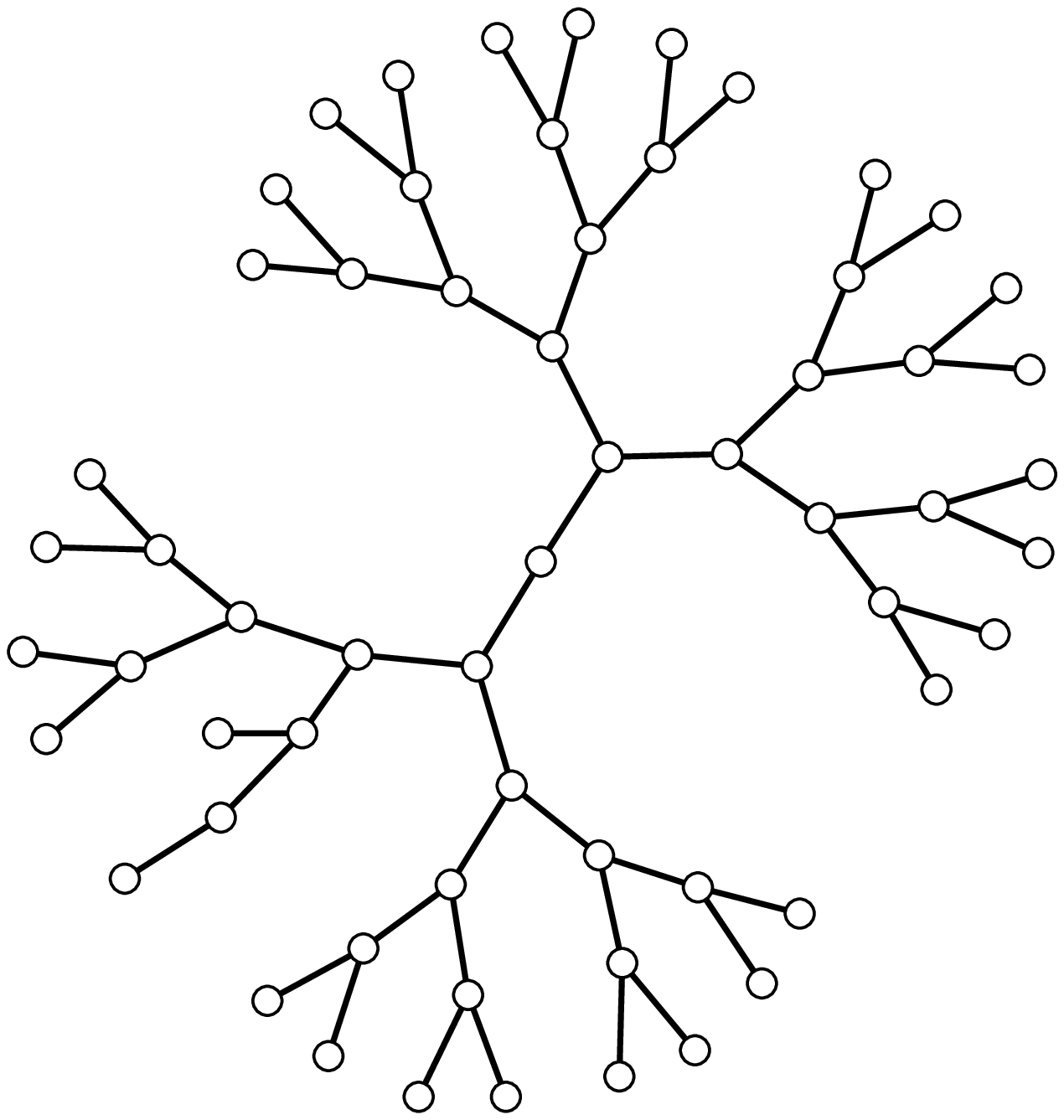} & \includegraphics[width=3.4cm]{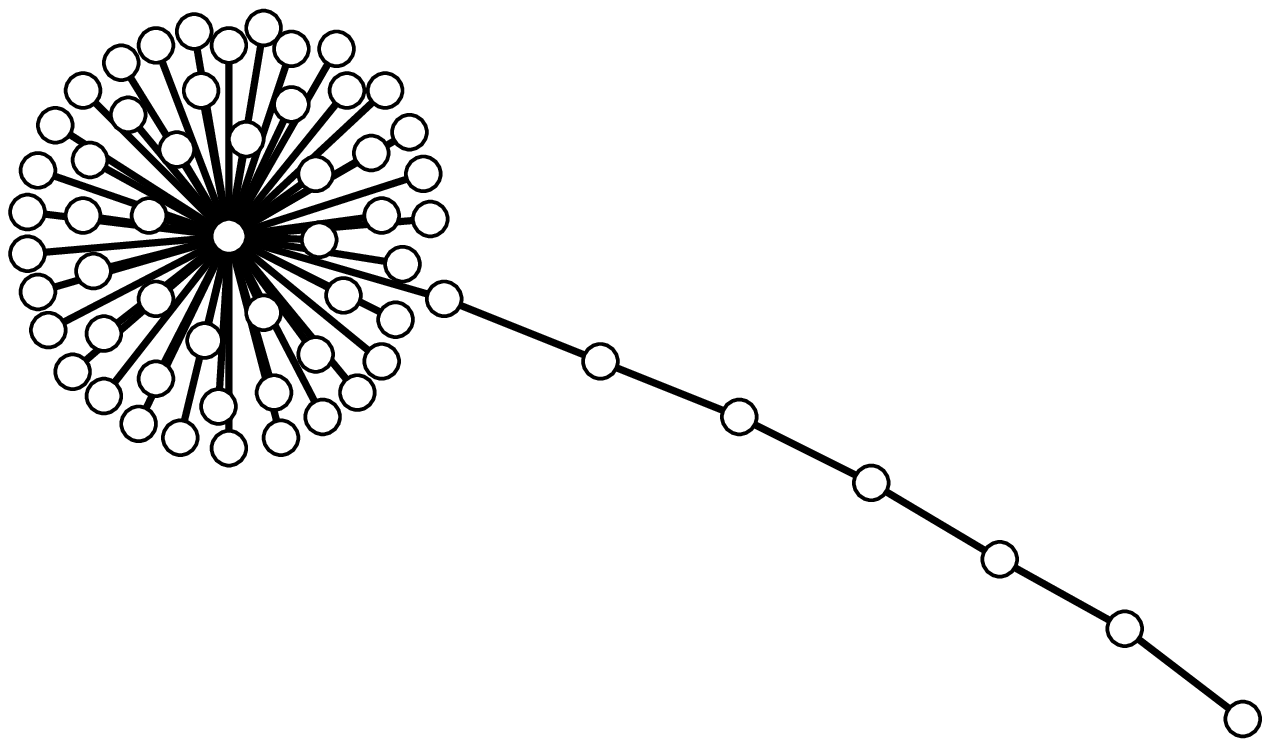} \\
(c) Binary tree & (d) ``Tadpole'' \\
\hline
\end{tabular}
\caption{Some example trees with $n = 60$.} \label{fig:examples}
\end{center}
\end{figure}

\subsection{Chains}

For the case of a chain (a tree with all vertices connected in a line) with $n$ vertices $\nu_1, \ldots, \nu_n$,
by Theorem \ref{main:theorem}, the critical coupling $\kcchain$ is:
\begin{equation}
\kcchain = \max_j \abs{ \sum_{i=1}^{j} (\omega_i - \bar{\omega}) } \label{neweq:16}
\end{equation}
The partition sums $\sum_{i=1}^{j} (\omega_i - \bar{\omega})$ for the first $j$ vertices form a random walk as $j$ increases. Figure \ref{fig:walk} illustrates an example, for $n = 60$. 
The walk must return to zero, since $\bar{\omega} = \sum_{i=1}^{n} \omega_i $, and so the maximum translation distance is likely to occur in the vicinity of the centre of the chain.  This random walk has step variance $\sigma^2$, and a crude estimate for the expected critical coupling $\ekcchain$ is the expected final displacement $\Delta$ for a random walk with this step variance and $\tau$ steps, where $\tau$ is some value between $n/2$ and $n$.  However, this takes into account neither the return to zero nor the fact that the maximum in (\ref{neweq:16}) is being taken over all values of $j$ (these two properties mean that, as pointed out in \cite{Strogatz1998}, we are dealing with ``pinned Brownian motion,'' also known as a ``Brownian bridge'').

\begin{figure}[htbp]
\begin{center}
\includegraphics[width=7.8cm]{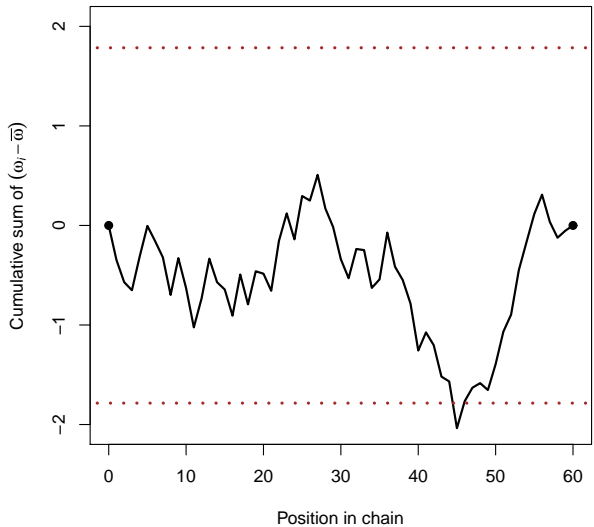}
\caption{One random walk of partition sums $\sum_{i=1}^{j} (\omega_i - \bar{\omega})$ for a chain with $n = 60$ and increasing $j$. Dotted lines show the empirical expected value $\ekcchain = \chi(n)$, which is the expected peak deviation from 0.} \label{fig:walk}
\end{center}
\end{figure}

An asymptotic expression for $\Delta$ was given by Coffman \textit{et al.} \cite{Coffman1998} and further studied by Comtet and Majumdar \cite{Comtet2005}.  For the case $\tau = n/2$, this expression gives:
\begin{equation}
\Delta = \sigma \sqrt{\frac{2}{\pi} \frac{n}{2}} - \sigma c = \sigma \sqrt{\frac{n}{\pi}} - \sigma c = \sqrt{\frac{n}{12 \pi}} - 0.148976
\end{equation}
where $c = 0.516068$, and we have ignored terms in $n^{-1/2}$.  Although this expected translation distance under-estimates the critical coupling, we will still have a result of the form $a \sqrt{n} + b$ (from \cite[p. 158]{Strogatz1998}, the critical coupling is $\mathcal{O}(\sqrt{n})$), and we can determine the values
of $a$ and $b$ empirically.

For our empirical Monte Carlo studies, we considered chains with the number of vertices $n$ ranging from 2 to 300,
with 1,000,000 different uniformly distributed frequency assignments (and 8,000,000 different normally distributed frequency assignments)
on each. We used Theorem \ref{main:theorem} to calculate critical couplings in each case, and fitted a curve of the form $a \sqrt{n} + b$ to the expected values.
Empirically, as illustrated in Figure \ref{fig:chain}, the expected critical coupling for a chain of $n$ vertices is:
\begin{equation}
\ekcchain = \chi(n) \approx 0.252 \sqrt{n} - 0.168 = 0.873 \, \sigma \sqrt{n} - 0.581 \, \sigma \label{chain:chidef}
\end{equation}
We express the result both in terms of $\sigma$, and for our special case of $\sigma = \sqrt{1/12}$.
As we will see in \S \ref{boundsec},  the expected critical coupling for the chain acts as an upper bound on $\ekc$ for trees in general. To facilitate use in this way, we abbreviate this function as $\chi(n)$. An upper bound on $\chi(n)$ itself is the expected maximum displacement for $n$ steps given by Weiss \cite[p. 192]{Weiss1994}. This is too high because it ignores the constraint of returning to zero: 
\begin{equation}
\ekcchain = \chi(n) \le \sigma \sqrt{\frac{\pi n}{4}} = \sqrt{\frac{\pi n}{48}}
\end{equation}
Figure \ref{fig:chain} shows this bound, together with the corresponding lower bound, which is the expected maximum displacement for $n/2$ steps. This is too low because it ignores the fact that the peak deviation from zero can occur at either end of the chain:
\begin{equation}
\ekcchain = \chi(n) \ge \sigma \sqrt{\frac{\pi n}{8}} = \sqrt{\frac{\pi n}{96}}
\end{equation}
A better result is obtained by noting that the maximum displacement for ``pinned Brownian motion'' follows Kolmogorov's Distribution \cite{Marsaglia2003}, and the expected maximum displacement therefore asymptotically approaches:
\begin{equation}
\ekcchain = \chi(n) \approx \sigma \sqrt{\frac{\pi n}{2}} \log 2 = 0.869 \, \sigma \sqrt{n} = 0.251 \sqrt{n}
\end{equation}
This asymptotic formula is in close agreement with the empirical formula (\ref{chain:chidef}), and indeed provides an explanation of that formula.  However, as Figure \ref{fig:chain} illustrates, for the finite values of $n$ which we are considering, the asymptotic formula gives an estimate which is a trifle too high.

\begin{figure}[htbp]
\begin{center}
\includegraphics[width=7.8cm]{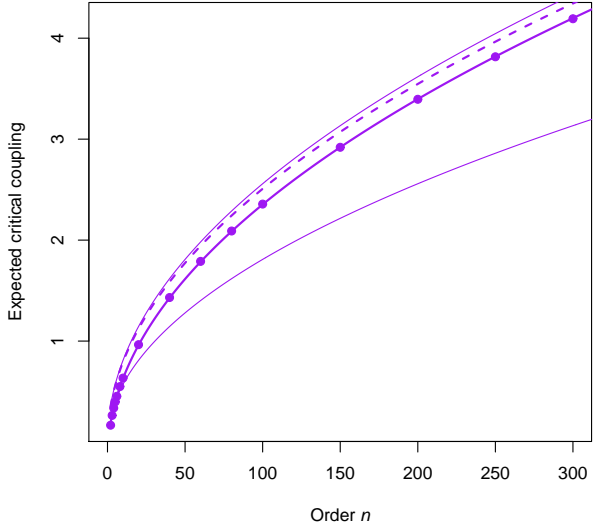}
\caption{Expected critical couplings for the chain, as a function of the number of vertices $n$. The thick line shows the empirical curve $\ekcchain = \chi(n) \approx 0.252 \sqrt{n} - 0.168$, while the thin lines shows the upper and lower random-walk bounds $\sqrt{\pi n / 48}$ and $\sqrt{\pi n / 96}$. The dashed line shows the asymptotic formula for Kolmogorov's Distribution. Results for normally distributed frequencies are, as expected, virtually identical, and cannot be distinguished on this plot. } \label{fig:chain}
\end{center}
\end{figure}

\subsection{Stars}

For star trees, such as the one in Figure \ref{fig:examples} (a), where $\nu_1$ is the central vertex:
\begin{equation}
\kcstar = \max_{i=2\ldots n} \abs{\omega_i - \bar{\omega}}
\end{equation}
For our choice of frequencies, taken from a distribution with mean \onehalf, the critical coupling, which is the expected value of this maximum, is given approximately by:
\begin{equation}
\ekcstar \approx E \left( \max_i \abs{ \omega_i - \frac{1}{2} } \right) + E \left( \abs { \frac{1}{2} - \bar{\omega} } \right) \label{neweq:22}
\end{equation}
where the second term is small. The expected maximum of $\abs{\omega_i - \frac{1}{2}}$, which is uniformly distributed, is given approximately by:
\begin{equation}
 E \left( \max_i \abs{ \omega_i - \frac{1}{2} } \right) \approx \sigma \sqrt{3} \left ( \frac{n-2}{n} \right ) = \frac{n-2}{2n} \label{neweq:23}
\end{equation}
This uses the fact that the expected maximum of $n$ independent uniform distributions is \cite{Kim1991}:
\[
\sigma \sqrt{3} \left( \frac{n-1}{n+1} \right)
\]
and the fact that the distributions are partially correlated, so that one of the $\omega_i$ can be inferred from $\bar{\omega}$ and the other $\omega_i$. In addition, the expected value of $\bar{\omega}$ differs from \onehalf\ by approximately:
\begin{equation}
E \left( \abs { \frac{1}{2} - \bar{\omega} } \right)  \approx \sigma \sqrt{\frac{2}{n \pi}} = \sqrt {\frac{1}{6 n \pi}} \label{neweq:24}
\end{equation}
By (\ref{neweq:22}), (\ref{neweq:23}), and (\ref{neweq:24}), a good estimate for the expected critical coupling $\ekcstar$ will be:
\begin{equation}
\ekcstar \approx \sigma \sqrt{3} \left ( \frac{n-2}{n} \right ) + \sigma \sqrt{\frac{2}{n \pi}} = \frac{n-2}{2n} + \sqrt {\frac{1}{6 n \pi}}
\end{equation}
Figure \ref{fig:star} shows that, for $n \geq 40$, this is an excellent estimate of the actual critical couplings, which are shown by solid triangles. It can also be seen that, as $n$ tends to infinity, the value of $\ekcstar$ approaches \onehalf. The expected critical coupling for the star acts as a lower bound for trees in general, since star trees have the smallest possible partitions.

\begin{figure}[htbp]
\begin{center}
\includegraphics[width=7.8cm]{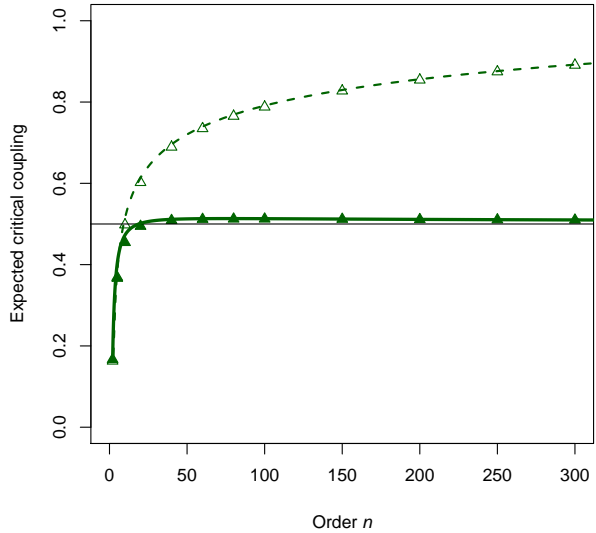}
\caption{Expected critical couplings for the star, as a function of the number of vertices $n$. The thicker solid line shows the curve $(n - 2) / (2 n) + \sqrt {1 / (6 n \pi)}$, while the thinner line shows the infinite-size limit of \onehalf\ for uniform distributions. For comparison, open triangles show expected critical couplings for normally distributed frequencies, and the dashed line shows the estimator $\sigma \mu(n-2)$. } \label{fig:star}
\end{center}
\end{figure}

\begin{figure}[htbp]
\begin{center}
\includegraphics[width=7.8cm]{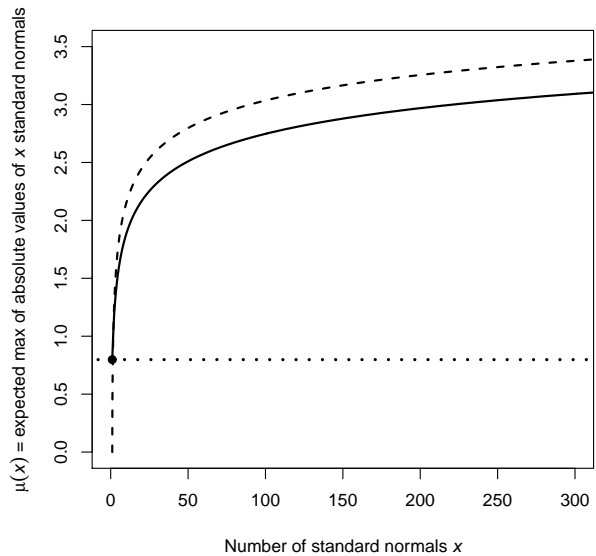}
\caption{The function $\mu(x)$, the expected value of the maximum of the absolute values of $x$ independent normally distributed random variables of unit variance (solid line). The dashed line shows the approximation  $\sqrt{2 \log (x)}$, while the dotted line shows $\mu(1) = \sqrt{2/\pi}$. } \label{fig:mu}
\end{center}
\end{figure}

For comparison, with normally distributed frequencies, the expected maximum of $\abs{\omega_i - \bar{\omega}}$ can be very closely approximated using the expected value of the maximum of the absolute values of $x$ independent normally distributed random variables of unit variance, which we write as $\mu(x)$.  For $x = 1$, $\mu(1) = \sqrt{2/\pi}$, as shown in Figure \ref{fig:mu}.  For $x \geq 2$, the value of $\mu(x)$ is slightly higher than (and converges to) the expected value of the maximum of $x$ independent normally distributed random variables of unit variance and zero mean, which can be roughly approximated by $\sqrt{2 \log (x)}$ \cite{Kim1991}.  The function $\mu(x)$ can be
expressed more precisely using the inverse complementary error function:
\begin{equation}
\mu(x) = \sqrt{2} \left[(1-\gamma)\ \erfcinv\left(\frac{1}{x}\right) + \gamma\ \erfcinv\left(\frac{1}{xe}\right) \right]
\end{equation}
where $\gamma = 0.57721566490\ldots$ is the Euler-Mascheroni Constant. However, we set $\mu(0) = 0$ by definition.

The expected critical coupling for normally distributed frequencies, shown by open triangles in Figure \ref{fig:star}, is very closely approximated by:
\[
\sigma \mu(n-2) = \frac{\mu(n-2)}{\sqrt{12}}
\]

It is interesting to compare this result with the work of Bronski \textit{et al.} \cite{Bronski2012}, who examine the case of the fully-connected network, in which every vertex is connected like the central vertex of the star. Their Theorem 4.1 shows that, with normally distributed frequencies, a scaling factor of:
\[
\frac{\sqrt{2 \log (n)}}{n+1}
\]
applies in the large-$n$ limit \cite{Bronski2012}, consistent with the approximation of $\mu(x)$ discussed above, and with the inverse relationship between $k_c$ and $n$ expressed in the usual scaling factor of $n^{-1}$ \cite{Arenas2008}.

\subsection{Dumb-bells} \label{ss:dumbell}

For ``dumb-bell'' trees, such as the one in Figure \ref{fig:examples} (b), the critical coupling is the result of the combined effects of the $n-2$ leaves and the fact that the tree can be partitioned into two halves. The latter has the greater influence, so by Theorem \ref{main:theorem}:
\begin{equation}
\kcdb \approx \abs{\sum_{i=1}^{n/2} (\omega_i - \bar{\omega})} = \abs{\sum_{i=1+n/2}^{n} (\omega_i - \bar{\omega})}
\end{equation}
That is, the critical coupling $\kcdb$ is the deviation from $\bar{\omega}$ for the average of the $\omega_i$ for one half of the dumb-bell (without loss of generality, we will use $\omega_1,\ldots,\omega_{n/2}$). To calculate $\ekcdb$, the expected value of this deviation, we note that:
\begin{equation}
\bar{\omega} = \frac{1}{n} \left( \sum_{i=1}^{n/2} \omega_i + \sum_{i=1+n/2}^{n} \omega_i \right)
\end{equation}
Thus:
\begin{equation}
\sum_{i=1}^{n/2} \omega_i  - \frac{n}{2} \bar{\omega} = \sum_{i=1}^{n/2} \omega_i  - \frac{1}{2} \sum_{i=1}^{n} \omega_i  = \frac{1}{2} \sum_{i=1}^{n/2} \omega_i - \frac{1}{2} \sum_{i=1+n/2}^{n} \omega_i  \label{tony:eq29}
\end{equation}
The variance for this difference of sums is $n\sigma^2/4$, and hence the expected critical coupling will be:
\begin{equation}
\ekcdb \approx \sqrt{\frac{2}{\pi}} \sqrt{\frac{n\sigma^2}{4}} = \sigma \sqrt {\frac{n}{2 \pi}} = \sqrt {\frac{n}{24 \pi}} \label{tony:eq30}
\end{equation}
The dashed line in Figure \ref{fig:bell} shows this approximation.  A better approximation can be obtained by noting that when calculating the critical coupling from $\abs{\omega_i - \bar{\omega}}$ for half the dumb-bell gives a result of less than  $1/2 = \sigma \sqrt{3}$, then the critical coupling will be determined by the effect of the leaves, in  a similar way to the expected value for the star. Since the variance for (\ref{tony:eq29}) is $n\sigma^2/4$, the standard deviation is:
\begin{equation}
\sigma' = \frac {\sigma}{2} \sqrt{n}
\end{equation}
Using a first-order approximation, the probability that a normal distribution with that standard deviation is within $1/2 = \sigma \sqrt{3}$ of the mean will be approximately:
\[
2 \frac{\sigma \sqrt{3}}{\sigma' \sqrt{2 \pi}} = 2 \sqrt {\frac{6}{n \pi}}
\]
The impact of the leaves is, with this probability, to replace an expected value of about  $1/4 = \sigma (\sqrt{3})/2$ by one of about  $1/2 = \sigma \sqrt{3}$, i.e.\ to increase the expected critical coupling to:
\begin{eqnarray}
\ekcdb & = & \sigma \sqrt{\frac{n}{2 \pi}} + 2 \sqrt {\frac{6}{n \pi}} \left (\sigma \frac{\sqrt{3}}{2} \right) \\
 & = & \sigma \sqrt{\frac{n}{2 \pi}} + \sigma \sqrt {\frac{18}{n \pi}} \;=\; \sqrt {\frac{n}{24 \pi}} + \sqrt{\frac{3}{2 n \pi}} \nonumber  
\end{eqnarray}
Figure \ref{fig:bell} shows that this correction ensures an excellent fit for $n \geq 20$. Note that as $n$ increases, the correction factor tends to zero, and $\ekcdb$ tends to the value given by (\ref{tony:eq30}).

\begin{figure}[htbp]
\begin{center}
\includegraphics[width=7.8cm]{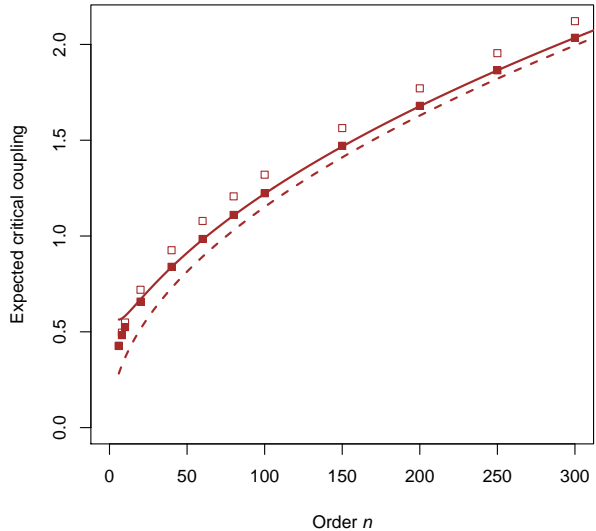}
\caption{Expected critical couplings for the dumb-bell. The dashed line shows the initial approximation $\sqrt{n / (24 \pi)}$ and the solid line the improved approximation, which adds the correction factor $\sqrt{3 / (2 n \pi)}$ due to the leaves. For comparison, open squares show values for normally distributed frequencies, which are higher due to an increased contribution from the leaves. } \label{fig:bell}
\end{center}
\end{figure}

\subsection{Binary trees}
We also considered binary trees, such as the one in Figure \ref{fig:examples} (c), i.e.\ trees with a root vertex $\nu_1$, and with vertices $\nu_{2i}$ and $\nu_{2i+1}$ having $\nu_i$ as a parent vertex. For these trees, the calculated expected critical couplings were well-predicted by the empirical curve $0.212 \sqrt{n} - 0.082$, shown in Figure \ref{fig:bintree}. A precise analytic solution for this case is difficult, since the binary tree contains partitions of multiple sizes, ranging from 1 to $(n-1)/2$. However, as Figure \ref{fig:bintree} shows, the binary tree is intermediate between the chain and the dumb-bell.

\begin{figure}[htbp]
\begin{center}
\includegraphics[width=7.8cm]{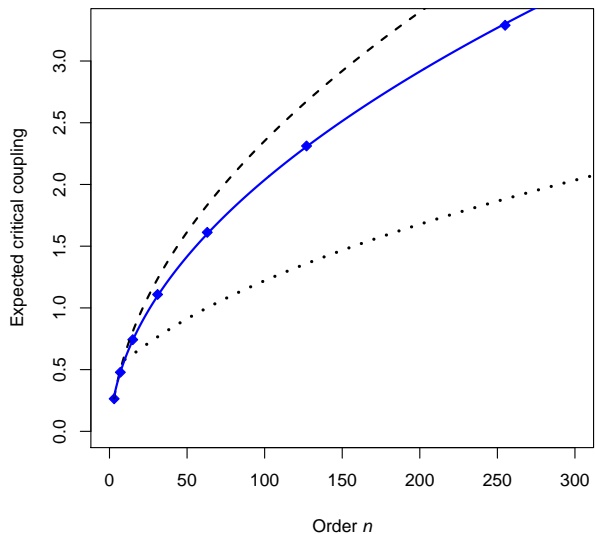}
\caption{Expected critical couplings for the binary tree. The solid line shows the empirical curve $0.212 \sqrt{n} - 0.082$. For comparison, the dashed line shows the empirical curve for the chain, while the dotted line shows the theoretical curve for the dumb-bell. Results for normally distributed frequencies are, as expected, virtually identical, and cannot be distinguished on this plot. } \label{fig:bintree}
\end{center}
\end{figure}

\subsection{Tadpoles}
``Tadpole'' graphs, such as the one in Figure \ref{fig:examples} (d), consist of a star combined with a ``tail,'' so that the diameter $D > 2$. Essentially, this is a kind of asymmetrical dumb-bell. The results in Figure \ref{fig:taddy} are based on tadpoles with $D = 8$.  For our uniformly distributed frequencies, the critical couplings converge to a limit of about 0.85, which reflects the combined contributions of the tail and the star.  Such a combination of effects is typical of non-symmetrical trees.  For normally distributed frequencies, the expected critical coupling grows logarithmically, as with the star.

\begin{figure}[htbp]
\begin{center}
\includegraphics[width=7.8cm]{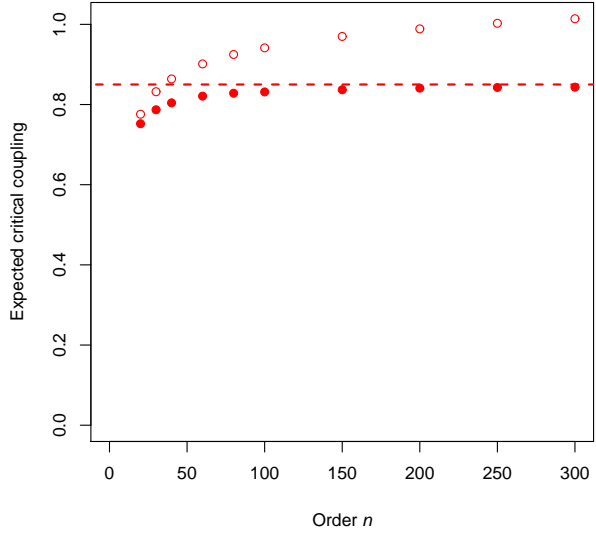}
\caption{Expected critical couplings for the $D=8$ tadpole, as a function of the number of vertices $n$. For our uniformly distributed frequencies, the critical couplings converge to a limit of about 0.85. For comparison, open circles show expected critical couplings for normally distributed frequencies. These grow logarithmically, as with the star. } \label{fig:taddy}
\end{center}
\end{figure}

\subsection{Upper and lower bounds}
\label{boundsec}

We now consider bounds on the expected critical couplings for trees with uniformly distributed frequencies. It is helpful to define $P_e$ as the number of vertices in the smaller of the two sub-trees $T_e$ and $T'_e$ formed when $e$ is deleted from $T$, and to define $P$ as the maximum of the $P_e$. We call $P$ the \textit{maximum partition size}.

In terms of $n$, the chain provides an upper bound for expected critical couplings on trees, since rearranging the topology of a chain can only result in some or all of the $P_e$ being smaller. In terms of diameter, however, the chain provides a lower bound, as Figure \ref{fig:combod} illustrates. This follows from considering the longest subchain (with $D+1$ vertices) within a tree. Any additional vertices can only increase the expected critical coupling. Consequently, the critical coupling for a tree of diameter $D$ is bounded below by the critical coupling for a chain of diameter $D$. Since $\chi(x)$ gives the expected critical coupling for a chain of $x$ vertices, we have:
\begin{equation}
\chi(D+1) \leq \ekc \leq \chi(n)    \label{lbeq:d}
\end{equation}
For example, for a tadpole tree with $D = 8$, $n \geq 17$, and our choice of frequencies, $\ekc$ tends to 0.85, and:
\[
\chi(9) = 0.59 < 0.85 <  0.87 = \chi(17)
\]

\begin{figure}[htbp]
\begin{center}
\includegraphics[width=7.8cm]{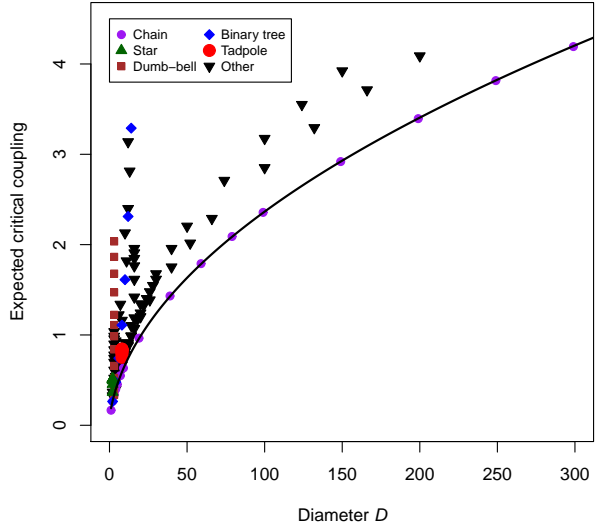}
\caption{Expected critical couplings for trees in terms of the diameter $D$. Downward-pointing black triangles represent trees not discussed above (including Y-shaped trees, X-shaped trees, and Scale-Free trees). The solid line gives the empirical curve $\chi(D+1)$ for the chain. } \label{fig:combod}
\end{center}
\end{figure}

The maximum partition size also produces a lower bound. While the chain and the dumb-bell have different diameters, they both have $P = \floor{n/2}$. For the binary tree, $P = \floor{n/2}$, while for the star, $P = 1$. For tadpole trees, $P = D-1$, provided $n \geq 2(D-1)$.

Partitions of size at least $P$ must occur at least twice, and these partitions are not independent, since the $\omega_i$ sum to $n\bar{\omega}$. Suppose that we decompose the tree into $r$ partitions (each of the form $T_e$ for some edge $e$), that $X_j$ is the sum of the $\omega_i$ over the $j^{\mathrm{th}}$ partition, and that the $j^{\mathrm{th}}$ partition contains $m_j$ vertices. Then:
\begin{equation}
X_j - m_j \bar{\omega} = (1 - \frac{1}{r})X_j - \frac{1}{r} \sum_{i \neq j} X_j
\end{equation}
Since we are considering partitions of size at least $P$, by a standard result on the variance of linear forms in distributions \cite{Rohatgi2001}, $X_j - m_j \bar{\omega}$ has a standard deviation of at least:
\begin{equation}
\sigma'' = \sigma \sqrt{P} \sqrt{ \left( \frac{r-1}{r} \right)^{2}+(r-1)\frac{1}{r^2} } = \sigma \sqrt{P} \sqrt{1-\frac{1}{r}} \label{richard:r}
\end{equation}
Since the number of partitions $r \geq 2$, we obtain a lower bound on $\ekc$:
\begin{equation}
\ekc \geq \sigma \sqrt{\frac{2}{\pi}} \sqrt {\frac{P}{2}} \geq \sigma \sqrt{\frac{P}{\pi}} = \sqrt {\frac{P}{12 \pi}}   \label{lbeq:p}
\end{equation}
Figure \ref{fig:part} illustrates this lower bound, using the same datapoints as Figure \ref{fig:combod}. For trees with small diameter, like the dumb-bell, this is tighter than the bound based on $D$.

\begin{figure}[htbp]
\begin{center}
\includegraphics[width=7.8cm]{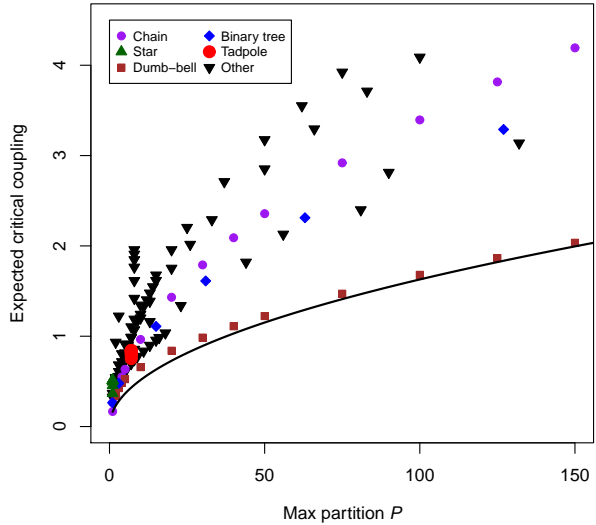}
\caption{Expected critical couplings for trees in terms of the maximum partition size $P$. The solid line gives the lower bound $\sqrt{P / (12 \pi)}$. Datapoints are the same as Figure \ref{fig:combod}.} \label{fig:part}
\end{center}
\end{figure}

We also obtain an upper bound in terms of $P$ by noting that the worst-case partition is half of a chain of length $2 P$, and that this partition occurs at most $n/P$ times. We can approximate $\mu(n/P)$ by $\sqrt{2 \log (n/P)}$, so that, for suitable constants $a$, $b$, and $c$:
\begin{equation}
\ekc \leq a \chi(2 P) \sqrt{2 \log \frac{n}{P}} = b \sigma \sqrt{P} \sqrt{2 \log \frac{n}{P}} = c \sigma \sqrt{P \log \frac{n}{P}}
\end{equation}
Since the chain is the worst case, from (\ref{chain:chidef}) it suffices to take $c = \frac{3}{2} > 0.873 \sqrt {2/ \log 2}$, i.e.
\begin{equation}
\ekc < \frac{3}{2} \sigma \sqrt{P \log \frac{n}{P}} = \frac{1}{4} \sqrt{3 P \log \frac{n}{P}}  \label{ubeq:p}
\end{equation}
Figure \ref{fig:mult} illustrates this bound.

\begin{figure}[htbp]
\begin{center}
\includegraphics[width=7.8cm]{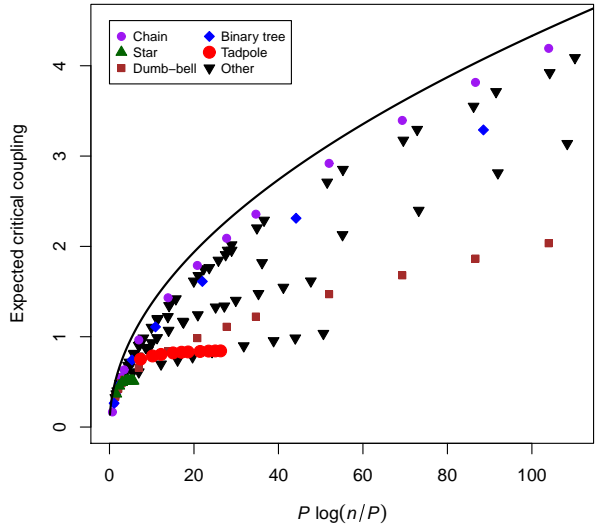}
\caption{Expected critical couplings for trees in terms of $P \log (n/P)$. The solid line gives the upper bound $(1/4) \sqrt{3 P \log (n/P)}$. Datapoints are the same as in Figure \ref{fig:combod}.} \label{fig:mult}
\end{center}
\end{figure}

\subsection{Bounds for normally distributed frequencies}
Since normally distributed frequencies lead to critical couplings that are either virtually identical to or larger than the critical couplings for uniformly distributed frequencies, and since the chain continues to be the worst case, the bounds in the previous section apply also for normally distributed frequencies. In particular, Figure \ref{fig:mnorm} confirms visually that the upper bound (\ref{ubeq:p}) still holds.

\begin{figure}[htbp]
\begin{center}
\includegraphics[width=7.8cm]{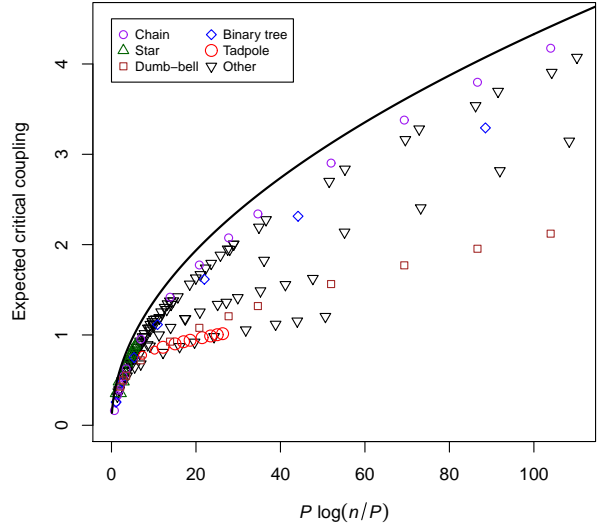}
\caption{Expected critical couplings for trees with normally distributed frequencies in terms of $P \log (n/P)$. The solid line gives the upper bound $(1/4) \sqrt{3 P \log (n/P)}$, as in Figure \ref{fig:mult}.} \label{fig:mnorm}
\end{center}
\end{figure}

In terms of the order $n$, expected critical couplings are bounded above by $\chi(n)$, the value for the chain, as is the case for the uniform distribution. Expected critical couplings are bounded below by $\sigma \mu(n-2)$, the value for the star, which grows logarithmically. Figure \ref{fig:newnorm} illustrates these two bounds. It follows from these bounds that, for all trees with normally distributed frequencies, $k_c \rightarrow \infty$ as $n \rightarrow \infty$. This is consistent with Corollary 3.2 of \cite{Strogatz1998}, that for trees with ``broad-banded'' frequency distributions, the probability of a phase-locked fixed point tends to zero.

\begin{figure}[htbp]
\begin{center}
\includegraphics[width=7.8cm]{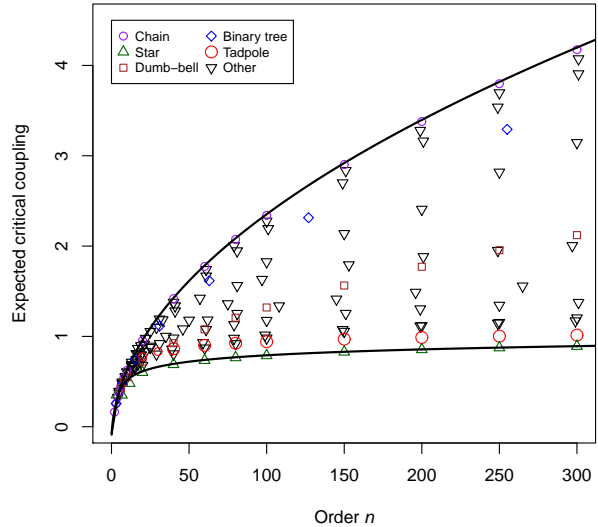}
\caption{Expected critical couplings for trees with normally distributed frequencies in terms of the order $n$. The solid lines gives the upper bound $\chi(n)$ and the lower bound $\sigma \mu(n-2)$. Note that, in every case, $k_c \rightarrow \infty$ as $n \rightarrow \infty$.} \label{fig:newnorm}
\end{center}
\end{figure}


\section{Minimising the Critical Coupling using Rearrangement}
\label{sec:rearrange}

The results above have application to organisational trees.  In this case, the frequencies $\omega_i$ can reflect the speed of the decision cycle for person $i$ (as in \cite{Kalloniatis2008}) or, more abstractly, they can represent a one-dimensional projection of the activities of person $i$ (as in \cite{Dekker2011}).
In both cases, synchronisation problems arise when vertices with different $\omega_i$ are not strongly coupled. Organisational structures are often hierarchical trees (e.g.\ binary trees), because such trees offer a number of advantages, such as low diameter (worst-case distance between pairs of vertices) together with low degree (the number of subordinates an individual has responsibility for) \cite{roux2002}. If we also consider synchronisation to be favourable (in that steady organisational work patterns are achieved), we would like to combine an efficient tree structure with a low critical coupling. 

As an example, Figure \ref{fig:org} shows the structure of a real organisation, where the numbers in each vertex are a one-dimensional projection of each person's activities, calculated by administering a survey and taking the most informative of the principal components of the answers.
Interpreting these numbers as frequencies, using Theorem \ref{main:theorem} we calculate a critical coupling $k_c = 5.08$.  Improving the synchronisation of the network of people within this organisation requires either management activities to more strongly couple people's work, or some form of structural reorganisation.

\begin{figure}[htbp]
\begin{center}
\includegraphics[width=7.8cm]{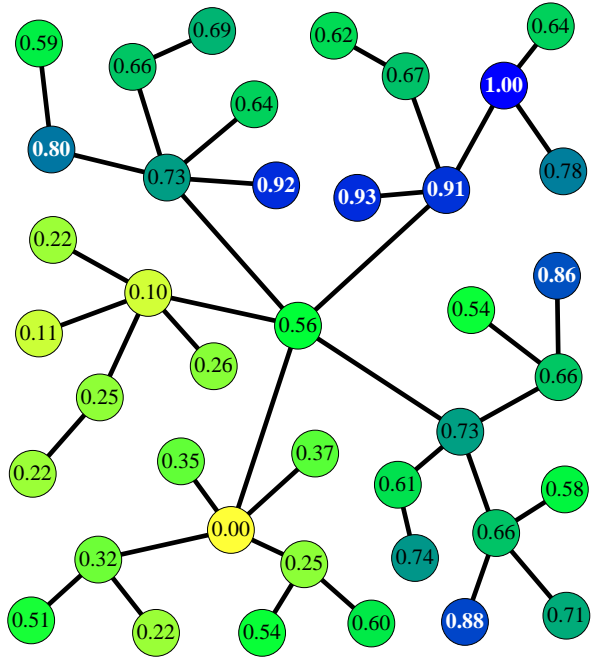}
\caption{An organisational tree with a critical coupling $k_c = 5.08$, which reduces to 1.07 after rearranging frequencies. The two branches at the lower left are somewhat out of step with the rest of the organisation.} \label{fig:org}
\end{center}
\end{figure}

As an example of structural reorganisation, for the organisation in Figure \ref{fig:org}, we can construct a Kuramoto tree which synchronises much more easily (with $k_c = 1.07$) by leaving the topology of the organisational tree unchanged, but re-shuffling people (i.e.\ frequencies $\omega_i$) within it.  In a managed situation like an organisation, we often have some discretion to reshuffle people in this way.  In fact, the following theorem shows that we can reduce the critical coupling to a value which is independent of the number of vertices:

\begin{theorem}\label{thm:tree}Let $T$ be any tree network of $n\geq 2$ vertices, and $\omega_{i}$, for $i = 1,\ldots,n$, be the collection of natural frequencies. Let $T^{a}$ be the tree which is isomorphic to $T$ in which the frequencies are assigned to vertices according to some assignment $a$. Then for some $a$,
\begin{equation}
k_{c}(T^{a}) \leq \omega_{\max}-\omega_{\min}
\end{equation}
where $\omega_{\max}$ is the largest frequency and $\omega_{\min}$ is the smallest.
\end{theorem}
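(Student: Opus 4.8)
The plan is to use Theorem \ref{main:theorem} to restate the claim purely in terms of subtree sums of the mean-centred frequencies, and then to exhibit a suitable assignment $a$ by a greedy one-dimensional rearrangement along a depth-first ordering of $T$. Write $d_i = \omega_i - \bar{\omega}$ for the deviations, so that $\sum_i d_i = 0$ and the range $\max_i d_i - \min_i d_i$ equals $R := \omega_{\max} - \omega_{\min}$ (the range is unchanged by the shift by $\bar{\omega}$). By Theorem \ref{main:theorem}, $k_c(T^a) = \max_{e} \abs{\sum_{i \in T_e} d_i}$, so it suffices to produce an assignment for which every partition sum $\abs{\sum_{i \in T_e} d_i}$ is at most $R$.

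First I would root $T$ at an arbitrary vertex and fix a pre-order (depth-first) listing $p_1,\ldots,p_n$ of its vertices. The key structural fact is that in pre-order every rooted subtree occupies a contiguous block of positions; since each edge $e$ cuts off exactly one rooted subtree (the side not containing the root), the partition sum is an \emph{interval sum} $P_b - P_a$ of the prefix sums $P_j = \sum_{\ell \le j} d_{\sigma(\ell)}$, where $\sigma$ records which deviation is placed at each position. Hence, if the deviations are ordered so that all prefix sums $P_0 = 0, P_1, \ldots, P_n = 0$ lie in the interval $[\min_i d_i,\ \max_i d_i]$, then every interval sum, and therefore every partition sum, has absolute value at most $R$.

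The heart of the argument is thus the following rearrangement lemma: any real numbers summing to zero can be ordered so that all prefix sums lie between their minimum and maximum value. I would prove this greedily: at each step, if the running sum $P_{j-1}$ is nonnegative, append the most negative remaining deviation, and otherwise append the most positive. A short induction shows $P_j \in [\min_i d_i,\ \max_i d_i]$ throughout: when $P_{j-1} \ge 0$ a nonpositive element is available (otherwise the remaining elements could not sum to $-P_{j-1} \le 0$), and adding it keeps $P_j \ge \min_i d_i$ without increasing the sum, and symmetrically when $P_{j-1} < 0$. Placing the $j$-th element of this sequence at position $p_j$ defines the assignment $a$, and the conclusion $k_c(T^a) \le R$ follows from the previous paragraph.

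The main obstacle, and the step requiring care, is the interaction between the one-dimensional greedy order and the tree topology: the greedy lemma produces a free ordering of the values, but only orderings consistent with the pre-order of $T$ turn subtrees into intervals. The reduction resolves this by decoupling the two concerns: the pre-order of $T$ is fixed first and determines \emph{which positions} form each subtree, while the greedy lemma is then free to decide \emph{which value} sits at each position, since it constrains only prefix sums and not the underlying tree. Verifying the degenerate cases of the greedy step (repeated values, or running sum exactly zero with no negative element remaining) and confirming that the resulting bound is exactly $R$ rather than $R/2$ or $2R$ are the remaining routine checks.
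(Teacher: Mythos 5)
Your proposal is correct and takes essentially the same route as the paper's proof: both reduce the claim to bounding partition sums via Theorem \ref{main:theorem}, exploit a depth-first ordering so that one side of every edge cut occupies a contiguous block of positions, and then rearrange the frequencies so that all prefix sums of the deviations $\omega_i - \bar{\omega}$ remain in the band $[\omega_{\min}-\bar{\omega},\ \omega_{\max}-\bar{\omega}]$, whence every interval sum is bounded by $\omega_{\max}-\omega_{\min}$. The only difference is cosmetic: your greedy most-negative/most-positive rule replaces the paper's construction of alternating runs of below-mean and above-mean frequencies starting from $\omega_{\max}$, but both produce exactly the same kind of band-confined ordering.
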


\begin{proof}
The proof is constructive. First we note that by traversing the edges and vertices of the tree in depth-first order, the corresponding assignment of the vertices of $T$ to the natural numbers $1,2,\ldots,n$ has the property that for any edge $e$, at least one of the components of $T-{e}$ is assigned a set of numbers that are of the contiguous form $k,k+1,\dots,m$ for some $k$ and $m$. In Figure \ref{rtfigure2} we show an example of such a vertex assignment giving the correspondence between edges and contiguous number sets.

\begin{figure}[htbp]
\begin{center}
\includegraphics[width=7.8cm]{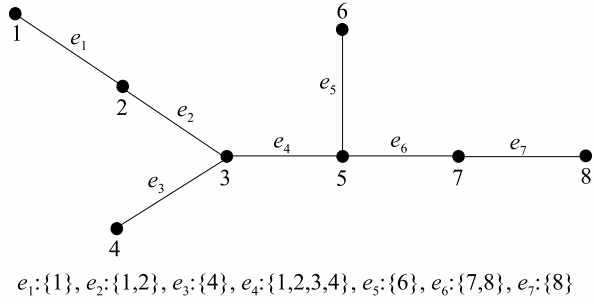}
\caption{An assignment of the numbers $1,\ldots,8$ to vertices of a tree such that, for any edge $e$, at least one of the components of $T-{e}$ is assigned a contiguous set of numbers, as per Theorem \ref{thm:tree}.}
\label{rtfigure2}
\end{center}
\end{figure}

We shall assume that we have such an assignment of numbers to vertices and that vertex $\nu_i$ refers to the vertex assigned to the number $i$. We now order the natural frequencies as follows. Select the first frequency to be $\omega_{\max}$. Add frequencies $\omega_{a1},\omega_{a2},\ldots,\omega_{ap}$ from any previously unselected frequencies less than $\bar{\omega}$, stopping at the least $p$ where:
\begin{equation}
\sum_{i=1}^p \omega_{ai} \leq p\,\bar{\omega}
\end{equation}
Add frequencies $\omega_{a(p+1)},\omega_{a(p+2)},\ldots,\omega_{aq}$ from any previously unselected frequencies greater than $\bar{\omega}$ stopping at the least $q$ where:
\begin{equation}
\sum_{i=1}^q \omega_{ai} \geq q\,\bar{\omega}
\end{equation}
In similar fashion continue to add vertices alternating between runs of frequencies less than $\bar{\omega}$ and greater than $\bar{\omega}$, until all frequencies are used. The function $f(k)$ given by:
\begin{equation}
f(k)=\left[\sum_{i=1}^k \omega_{ai}\right]-k\bar{\omega}
\end{equation}
must then take values between $\omega_{\max}-\bar{\omega}$ and $\omega_{\min}-\bar{\omega}$, with $f(n)=0$ (see Figure \ref{rtfigure3}).

\begin{figure}[htbp]
\begin{center}
\includegraphics[width=7.8cm]{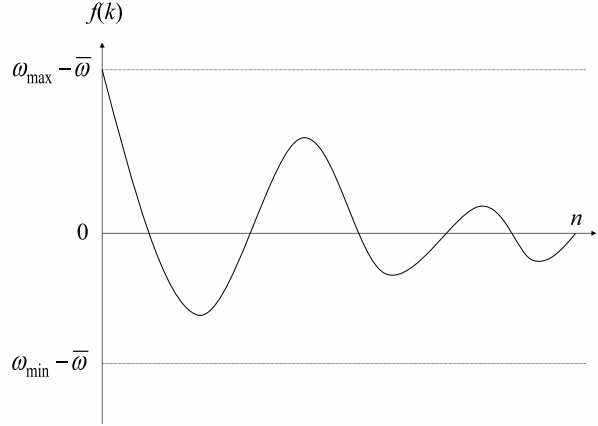}
\caption{Illustration of the function $f(k)$ from the proof of Theorem \ref{thm:tree}.}
\label{rtfigure3}
\end{center}
\end{figure}

Finally we assign frequency $\omega_{ai}$ to vertex $\nu_i$ for $i = 1,\ldots,n$. Recall that by our earlier observation, for any edge $e$ at least one of the components of $T-\{e\}$, say $T_{e}$, has vertices with natural frequencies from a contiguous sequence $\omega_{m},\omega_{m+1},\ldots,\omega_{m+r}$ for some $m$ and $r$. These frequencies correspond to a segment of the function $f$ from $f(m)$ to $f(m+r)$. Clearly:
\begin{equation}
| f(m+r)-f(m)| \leq (\omega_{\max}-\bar{\omega})- (\omega_{min}-\bar{\omega}) = \omega_{\max}-\omega_{\min} \label{rt:11}
\end{equation}
Also:
\begin{equation}
| f(m+r)-f(m)| = \left|\sum_{i=m}^{m+r}\omega_{ai}-(r+1)\bar{\omega}\right| = \left| \sum_{i \in {T_{e}}}\omega_{i}-|T_{e}| \bar{\omega} \right| \label{rt:12}
\end{equation}
Combining (\ref{rt:11}) and (\ref{rt:12}),
\begin{equation}
\left| \sum_{i \in {T_{e}}}\omega_{i}-|T_{e}| \bar{\omega} \right| \leq \omega_{\max}-\omega_{\min}
\end{equation}
Since this is true for all edges $e$, by Theorem \ref{main:theorem} we obtain the result. \qquad \end{proof}

The following example shows that this result is in a certain sense best possible. Let a star tree have 2 vertices with frequency $\xi$ and $n-2$ vertices with frequency $\zeta$. Then for every assignment $a$ of frequencies to vertices,
\begin{equation}
k_{c}(T^{a})=\left|\xi-\zeta+\frac{2\zeta-2\xi}{n}\right|\rightarrow\left|\xi-\zeta\right|
\end{equation}
This follows from Theorem \ref{main:theorem}, noting that every leaf vertex of the star must be assigned frequency $\xi$ or $\zeta$.

Figure \ref{fig:rearrange} shows an example for a 15-vertex binary tree.  
As another example, for 255-vertex binary trees, the expected critical coupling without rearrangement was 3.289, using 1,000,000 different frequency assignments, with all frequencies uniformly distributed over the interval [0,1] (i.e.\ $\omega_{\max} - \omega_{\min} \leq 1$). Rearrangement ensured that all 1,000,000 critical couplings were restricted to the range 0.462 to 0.960 (less than $\omega_{\max} - \omega_{\min}$), with an expected critical coupling of 0.685.

\begin{figure}[htbp]
\begin{center}
\includegraphics[width=7.8cm]{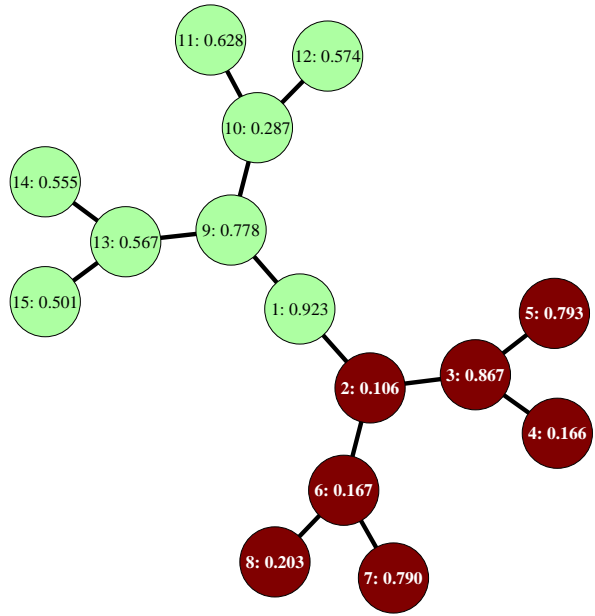}
\caption{A binary tree with frequencies uniformly distributed over the interval [0,1], after rearrangement. Vertices are labelled with depth-first-search index and frequency $\omega_i$. The mean frequency $\bar{\omega}$ is 0.527, and the critical coupling of 0.596 is based on the partition on the lower right.} \label{fig:rearrange}
\end{center}
\end{figure}


\section{Discussion}
\label{sec:disc}

In this paper we have explored the critical coupling for Kuramoto oscillators arranged in tree topologies.  With Theorem \ref{main:theorem}, we provided a closed-form solution (\ref{maineq:15}) for the critical coupling $k_c$, in the case that frequencies were known.

In the case that only frequency distributions were known, we calculated $\ekc$, the expected value of the critical coupling, for chains and stars (for both uniform and Gaussian vertex frequency distributions) and for ``dumb-bells'' (for uniform distributions only). We also provided empirical values of $\ekc$ for these cases, and for the ``tadpole'' and binary tree. For trees in general, we provided lower bounds on $\ekc$ in terms of the diameter $D$ (\ref{lbeq:d}) and the maximum partition size $P$ (\ref{lbeq:p}). We provided upper bounds in terms of the number of vertices (the expected critical coupling for the chain performs this function) and the maximum partition size (\ref{ubeq:p}). These bounds hold for both uniform and Gaussian frequency distributions.

Finally, Theorem \ref{thm:tree} showed that, for a given set of vertex frequencies, there is a rearrangement of oscillator frequencies for which the critical coupling is bounded by the spread of frequencies.

In future work, we hope to provide closed-form expressions for the expected critical coupling $\ekc$ for a number of other cases of tree and, more importantly, to use methods similar to Lemma \ref{main:lemma} to determine the critical coupling $k_c$ for other classes of graph. Corollary \ref{bell:corollary} is a small step in this direction.

\bibliography{trees7}
\bibliographystyle{siam}

\end{document}